\title[Cohomologies of Sasakian groups and Sasakian solvmanifolds]
{Cohomologies of Sasakian groups and Sasakian solvmanifolds}
\author{Hisashi Kasuya}
\theoremstyle{plain}
\theoremstyle{plain}
\theoremstyle{plain}
\theoremstyle{plain}
\newtheorem{theorem}{Theorem}[section] 
\theoremstyle{remark}
\newtheorem{remark}{Remark}
\theoremstyle{Main result}
\newtheorem{main result}{Main result}
\theoremstyle{lemma}
\newtheorem{lemma}[theorem]{Lemma}
\theoremstyle{definition}
\theoremstyle{proposition}
\theoremstyle{corollary}
\newtheorem{corollary}[theorem]{Corollary}
\theoremstyle{remark}
\theoremstyle{remark}
\theoremstyle{remark}
\theoremstyle{assumption}
\address[Hisashi Kasuya]{Department of Mathematics, Tokyo Institute of Technology, 2-12-1 Ookayama, Meguro-ku, Tokyo 152-8551, JAPAN}
\email{kasuya@math.titech.ac.jp}
\keywords{Sasakian group, Hodge theory,  solvmanifold, basic cohomology}
\subjclass[2010]{53C25, 53C30, 58A14}
\newcommand{\C}{\mathbb{C}}
\newcommand{\R}{\mathbb{R}}
\newcommand{\Z}{\mathbb{Z}}
\newcommand{\g}{\frak{g}}
\begin{document} 

\maketitle
\begin{abstract}
We show certain symmetry  of the dimensions of  cohomologies  of the fundamental groups of compact Sasakian manifolds by using the Hodge theory of twisted basic cohomology.
As  applications, we show that the polycyclic fundamental groups of compact Sasakian manifolds are virtually nilpotent and Sasakian solvmanifolds are finite quotients of  Heisenberg nilmanifolds.
\end{abstract}
\section{Introduction}

The purpose of this paper is to study the fundamental groups of compact Sasakian manifolds.
Sasakian manifolds constitute an odd-dimensional counterpart of the class of K\"ahler manifolds.
A Riemannian manifold $(M,g)$ is a Sasakian manifold if the cone metric on the cone manifold $M\times \R_{+}$ is a K\"ahler metric. 
The fundamental groups of compact  K\"ahler manifolds satisfy various properties.
Since the first cohomology of  manifolds and the first group cohomology of their fundamental groups are isomorphic,
the first group cohomology of the fundamental group of   a compact K\"ahler manifold was studied precisely by using the Hodge theory.
In this paper, we study the first cohomologies of the fundamental groups of compact Sasakian manifolds for $1$-dimensional representations by using the Hodge theory of the basic cohomology.

Let $\Gamma$ be a group.
We denote by ${\mathcal C}(\Gamma)$ the space of characters $\Gamma\to \mathrm{GL}_{1}(\C)$ which can be factored as
\[\Gamma\to H_{1}(\Gamma,\Z)/({\rm torsion})\to \mathrm{GL}_{1}(\C).
\]
For $\rho\in {\mathcal C}(\Gamma)$, we denote by $H^{\ast}(\Gamma,\C_{\rho})$ the group cohomology of $\Gamma$ with values in the module associated with the $1$-dimensional representation $\rho$.
Considering the exponential map $\C\to  \C^{\ast}=\mathrm{GL}_{1}(\C)$,
we have the surjective map ${\mathcal E}:H^{1}(\Gamma,\C)\to {\mathcal C}(\Gamma)$.
We define the "real" action of $\R^{\ast}$ on ${\mathcal C}(\Gamma)$ such that for  $f_{1}+\sqrt{-1}f_{2}\in H^{1}(\Gamma,\C)$  with $f_{1}, f_{2}\in H^{1}(M,\R)$, the action is given by
\[t\cdot {\mathcal E}(f_{1}+\sqrt{-1}f_{2})={\mathcal E}(tf_{1}+\sqrt{-1}f_{2})
\]
for $t\in\R^{\ast}$.

In this paper we prove the "$\R^{\ast}$-symmetry" of cohomologies of the fundamental group of a compact Sasakian manifold. 

\begin{theorem}\label{locii}
Let $M$ be a compact Sasakian manifold.
Then for each $\rho\in {\mathcal C}(\pi_{1}(M))$ and $t\in \R^{\ast}$, we have
\[\dim H^{\ast}(\pi_{1}(M), \C_{\rho}) =\dim H^{\ast}(\pi_{1}(M), \C_{t\cdot\rho}) .
\]
\end{theorem}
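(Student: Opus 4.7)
The plan is to represent $\rho$ by a basic harmonic $1$-form and then establish the $\R^{\ast}$-symmetry via transverse Hodge theory applied to a twisted basic de Rham complex, by analogy with Simpson's treatment of twisted cohomology on compact K\"ahler manifolds.

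First, on a compact Sasakian manifold, harmonic $1$-forms are automatically basic: the Reeb vector field $\xi$ is Killing, and contraction with $\xi$ annihilates every harmonic $1$-form. Hence for $\rho={\mathcal E}(f_{1}+\sqrt{-1}f_{2})$ I can represent $f_{j}\in H^{1}(M,\R)$ by basic harmonic real $1$-forms $\eta_{j}$ and set $\eta=\eta_{1}+\sqrt{-1}\eta_{2}$, so that $t\cdot\rho$ corresponds to the basic closed $1$-form $t\eta_{1}+\sqrt{-1}\eta_{2}$. I would then form the twisted basic de Rham complex $(\Omega^{\bullet}_{B}(M),d_{B}+\eta\wedge)$, whose cohomology I denote $H^{\ast}_{B}(M,\eta)$, and use a Gysin-type long exact sequence for the Reeb foliation, together with the comparison between the group cohomology $H^{\ast}(\pi_{1}(M),\C_{\rho})$ and the twisted de Rham cohomology $H^{\ast}(M,\C_{\rho})$ (via the classifying map $M\to B\pi_{1}(M)$ and the associated Cartan--Leray spectral sequence), to reduce the $\R^{\ast}$-symmetry of $\dim H^{\ast}(\pi_{1}(M),\C_{\rho})$ to that of $\dim H^{\ast}_{B}(M,\eta)$.

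The heart of the argument is then transverse Hodge theory. By El Kacimi-Alaoui's theory, the transverse K\"ahler structure yields a Hodge decomposition of basic cohomology and transverse K\"ahler identities among the basic Laplacians $\Delta_{\partial_{B}}$, $\Delta_{\bar\partial_{B}}$, $\Delta_{d_{B}}$. I would extend these to the twisted setting: decomposing $\eta=\eta^{(1,0)}+\eta^{(0,1)}$ into transverse Hodge types, introducing twisted operators $D_{B}=d_{B}+\eta\wedge$ and $D^{c}_{B}=d^{c}_{B}+\eta^{c}\wedge$ on basic forms, and verifying the corresponding twisted transverse K\"ahler identities. The resulting Hodge decomposition of $H^{\ast}_{B}(M,\eta)$ implies that its dimensions are preserved under $\eta_{1}\to t\eta_{1}$, yielding the desired symmetry.

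The main obstacle is verifying the twisted transverse K\"ahler identities: the commutator of $\eta\wedge$ with the transverse Lefschetz operator $\Lambda$ produces additional zeroth-order terms, and one must confirm that these assemble into a Hodge decomposition whose Betti numbers do not depend on the real scalar $t$. Once the identities are in place, the $\R^{\ast}$-invariance follows in parallel with Simpson's K\"ahler argument.
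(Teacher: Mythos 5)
Your proposal follows essentially the same route as the paper: represent the character by a basic (harmonic) closed $1$-form, reduce $H^{1}(\pi_{1}(M),\C_{\rho})\cong H^{1}(M,\phi)$ to the twisted basic cohomology via the Gysin-type exact sequence for the Reeb foliation (the paper's Lemma \ref{tw1}), and then obtain the $\R^{\ast}$-invariance from twisted transverse K\"ahler identities in the style of Simpson (the paper's Lemmas \ref{id2} and \ref{id3}). The one step worth making explicit is that the invariance is not a consequence of the Hodge decomposition alone: one identifies the $d_{\phi}$-harmonic space with the harmonic space of the Higgs-type operator $\bar\partial_{B}+\theta_{1}\wedge-\bar\theta_{2}\wedge$ and then uses the bidegree-rescaling conjugation to see that its cohomology dimension is unchanged under $\theta_{1}\mapsto t\theta_{1}$, which is exactly the paper's Lemma \ref{RRA}.
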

For a group $\Gamma$, we consider the set
\[{\mathcal J}_{k}(\Gamma)=\{\rho\in {\mathcal C}(\Gamma)\vert \dim H^{1}(\Gamma,\C_{\rho})\ge k\}.
\]
For ${\mathcal C}(\pi_{1}(M))\ni \rho ={\mathcal E}(f_{1}+\sqrt{-1}f_{2})$ with $f_{1}, f_{2}\in H^{1}(M,\R)$, ${\mathcal E}(f_{1}+\sqrt{-1}f_{2})$ is fixed by the $\R^{\ast}$-action if and only if $f_{1}=0$ (equivalently $\rho$ is unitary).
Hence we have the following corollary.
\begin{corollary}\label{FIIIn}
Let $M$ be a compact Sasakian manifold.
If there exists a non-unitary character $\rho:\pi_{1}(M)\to \mathrm{GL}_{1}(\C)$ satisfying $H^{1}(\pi_{1}(M),\C_{\rho})\ge k$, then the set ${\mathcal J}_{k}(\pi_{1}(M))$ is an  infinite set.

\end{corollary}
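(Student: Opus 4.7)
The plan is to deduce this corollary as a formal consequence of Theorem~\ref{locii}. Starting from a non-unitary character $\rho \in \mathcal{C}(\pi_{1}(M))$ with $\dim H^{1}(\pi_{1}(M),\C_{\rho})\ge k$, write $\rho = \mathcal{E}(f_{1}+\sqrt{-1}f_{2})$ with $f_{1},f_{2}\in H^{1}(M,\R)$; the hypothesis of non-unitarity is equivalent to $f_{1}\ne 0$, as recorded just before the corollary. Applying Theorem~\ref{locii} in degree $1$ to each $t\in\R^{\ast}$ gives $\dim H^{1}(\pi_{1}(M),\C_{t\cdot\rho})=\dim H^{1}(\pi_{1}(M),\C_{\rho})\ge k$, so the entire orbit $\R^{\ast}\cdot\rho$ is contained in $\mathcal{J}_{k}(\pi_{1}(M))$.

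What remains is to check that this orbit is infinite, which amounts to showing that the $\R^{\ast}$-stabilizer of $\rho$ is trivial. I would compute this directly: $t\cdot\rho = \rho$ holds iff $(t-1)f_{1}\in\ker\mathcal{E}$, and the kernel of the exponential map $\mathcal{E}\colon H^{1}(\pi_{1}(M),\C)\to\mathcal{C}(\pi_{1}(M))$ is the purely imaginary integral lattice $2\pi\sqrt{-1}\cdot\mathrm{Hom}(H_{1}(\pi_{1}(M),\Z)/\mathrm{torsion},\Z)\subset \sqrt{-1}\,H^{1}(\pi_{1}(M),\R)$. Since $(t-1)f_{1}$ is a \emph{real} cohomology class, the only way it can lie in this purely imaginary set is $(t-1)f_{1}=0$, and combined with $f_{1}\ne 0$ this forces $t=1$. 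Hence the orbit map $\R^{\ast}\to\R^{\ast}\cdot\rho$ is a bijection and the orbit is uncountable.

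No substantive obstacle is expected: all analytic content is packaged in Theorem~\ref{locii}, and the corollary reduces to the elementary observation that the $\R^{\ast}$-action on $\mathcal{C}(\pi_{1}(M))$ has no non-trivial stabilizers off the unitary locus $\{f_{1}=0\}$. The only care required is in describing $\ker\mathcal{E}$ precisely, which is what converts ``non-unitary'' into ``free $\R^{\ast}$-orbit'', and hence into infinitely many characters in $\mathcal{J}_{k}(\pi_{1}(M))$.
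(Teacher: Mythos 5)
Your proposal is correct and follows essentially the same route as the paper: the paper also deduces the corollary by applying Theorem~\ref{locii} to put the whole orbit $\R^{\ast}\cdot\rho$ inside ${\mathcal J}_{k}(\pi_{1}(M))$ and by the observation, recorded just before the corollary, that ${\mathcal E}(f_{1}+\sqrt{-1}f_{2})$ is fixed by the $\R^{\ast}$-action iff $f_{1}=0$, i.e.\ iff $\rho$ is unitary. Your explicit description of $\ker{\mathcal E}$ as the purely imaginary integral lattice merely spells out the stabilizer computation that the paper leaves implicit, so the orbit is indeed infinite.
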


A group $\Gamma$ is polycyclic if it admits a sequence 
\[\Gamma=\Gamma_{0}\supset \Gamma_{1}\supset \cdot \cdot \cdot \supset \Gamma_{k}=\{ e \}\]
of subgroups such that each $\Gamma_{i}$ is normal in $\Gamma_{i-1}$ and $\Gamma_{i-1}/\Gamma_{i}$ is cyclic.
By  Corollary \ref{FIIIn}, we prove the following result which is  analogous to the Arapura-Nori's result in \cite{AN}.

\begin{corollary}\label{VNIL}
Let $M$ be a compact Sasakian manifold.
Suppose that the fundamental group $\pi_{1}(M)$  is polycyclic.
Then $\pi_{1}(M)$ is virtually nilpotent i.e., it admits a nilpotent subgroup of finite index.
\end{corollary}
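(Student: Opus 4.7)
The plan is to show that $\mathcal{J}_{1}(\Gamma)$ is finite for every polycyclic group $\Gamma$, and then to combine this with Corollary \ref{FIIIn} and Kronecker's theorem on eigenvalues of integer matrices in order to derive a contradiction from the assumption that $\Gamma = \pi_{1}(M)$ is polycyclic but not virtually nilpotent.

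Since every polycyclic group is virtually torsion-free, I first replace $\pi_{1}(M)$ by a torsion-free finite-index subgroup; this remains the fundamental group of a finite Sasakian cover of $M$ and is virtually nilpotent if and only if $\pi_{1}(M)$ is. Thus I may assume $\Gamma = \pi_{1}(M)$ is torsion-free polycyclic. Set $Q = \Gamma^{ab}/\mathrm{tors} \cong \Z^{b_{1}(\Gamma)}$ and $K = \ker(\Gamma \to Q)$; every $\rho \in \mathcal{C}(\Gamma)$ is trivial on $K$. The Lyndon--Hochschild--Serre spectral sequence for $1 \to K \to \Gamma \to Q \to 1$, combined with the vanishing $H^{\ast}(\Z^{b_{1}}, \C_{\rho}) = 0$ for $\rho \neq 1$, yields
\[
H^{1}(\Gamma, \C_{\rho}) \cong H^{0}(Q, H^{1}(K, \C_{\rho})) \qquad \text{for } \rho \neq 1,
\]
which as a $\C$-vector space is the $\rho^{-1}$-isotypic component of the $Q$-action (induced by conjugation) on the finite-dimensional space $H^{1}(K, \C) = (K^{ab})^{\vee} \otimes \C$. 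Since $Q \cong \Z^{b_{1}}$ acts by commuting integer operators, its set of joint eigencharacters on this space is finite, and hence $\mathcal{J}_{1}(\Gamma)$ is a finite subset of $\mathcal{C}(\Gamma)$.

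Now suppose, for contradiction, that $\Gamma$ is not virtually nilpotent. By Mostow's theorem $\Gamma$ embeds as a lattice in a simply-connected solvable Lie group; after passing to a further finite-index subgroup (still the fundamental group of a Sasakian cover of $M$), one may arrange an extension $1 \to F \to \Gamma \to \Z^{k} \to 1$ with $F$ nilpotent. If every joint eigenvalue of the induced $\Z^{k}$-action on $F^{ab} \otimes \C$ were a tuple of roots of unity, then a finite-index subgroup of $\Z^{k}$ would act unipotently on $F^{ab}$; propagating this unipotence along the lower central series of $F$ shows that it acts unipotently on all of $\mathrm{Lie}(F)$, producing a nilpotent finite-index subgroup of $\Gamma$, contradicting our assumption. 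So some joint eigenvalue $\lambda$ is not a tuple of roots of unity, and since the operators lie in $\mathrm{GL}_{n}(\Z)$, Kronecker's theorem forces the Galois orbit of $\lambda$ to contain a non-unitary tuple. The corresponding eigencharacter yields a non-unitary $\rho \in \mathcal{J}_{1}(\Gamma)$ via the spectral-sequence identification above. By Corollary \ref{FIIIn} the set $\mathcal{J}_{1}(\Gamma)$ is then infinite, contradicting its finiteness established in the previous paragraph.

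The main obstacle is the matching between the structural decomposition $\Gamma \supset F$ (with $F$ nilpotent and $\Gamma/F \cong \Z^{k}$) and the spectral-sequence data $(Q, K)$ used to compute $\mathcal{J}_{1}$: the subgroup $K$ generally contains $[\Gamma, \Gamma]$ and hence all of $F$, but may differ from $F$ by additional $\Z^{k}$-invariant quotients of $F^{ab}$ that have been absorbed into the abelianization of $\Gamma$. One must therefore verify that a non-unitary joint eigenvalue of the $\Z^{k}$-action on $F^{ab}$ does in fact produce a non-unitary joint eigencharacter of $Q$ on $K^{ab}$, so that the contradiction via Corollary \ref{FIIIn} genuinely applies.
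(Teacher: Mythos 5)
Your route is genuinely different from the paper's. The paper reduces to a lattice $\Gamma$ in a simply connected solvable Lie group (Raghunathan), invokes Dekimpe's splitting $G=C\cdot N$ to produce the characters $\alpha_{i}$, and then quotes two nontrivial results from \cite{KHH} — finiteness of ${\mathcal J}_{1}(\Gamma)$ and the existence of a non-unitary jumping character when some $\alpha_{i}$ is non-unitary — before finishing with Auslander's criterion (unitary eigenvalues of ${\rm Ad}$ imply the lattice is virtually nilpotent). You replace essentially all of this machinery by elementary group cohomology: the Lyndon--Hochschild--Serre five-term sequence gives finiteness of ${\mathcal J}_{1}$ for any polycyclic group, and Kronecker's theorem plus the quasi-unipotence/nilpotence dichotomy produces the non-unitary jumping character; the contradiction with Corollary \ref{FIIIn} is then the same as in the paper. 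This is a legitimate and more self-contained strategy, and your finiteness argument is correct as stated.

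However, the step you flag at the end is left open, and it is also where your write-up contains two slips. First, the containment is the other way around: since $\Gamma/F\cong\Z^{k}$ is abelian and torsion-free, the map $\Gamma\to\Z^{k}$ factors through $Q=\Gamma^{\rm ab}/{\rm tors}$, so $K\subseteq F$ (not $F\subseteq K$). Second, ``$\Gamma$ embeds as a lattice in a simply connected solvable Lie group'' is false for general torsion-free polycyclic groups (the Klein bottle group is a counterexample); you only need the extension $1\to F\to\Gamma\to\Z^{k}\to 1$ with $F$ nilpotent after passing to a finite-index subgroup, which follows from \cite[Theorem 4.28]{R} together with the fact that a lattice meets the nilradical in a lattice, or from the Fitting subgroup. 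The good news is that your ``main obstacle'' is not actually an obstacle, because no matching with the $(Q,K)$ data is needed: any character $\rho$ of $\Gamma$ trivial on $F$ automatically lies in ${\mathcal C}(\Gamma)$ (it factors through the torsion-free abelian quotient $\Gamma/F$, hence kills the torsion of $\Gamma^{\rm ab}$), and for such nontrivial $\rho$ the five-term sequence of $1\to F\to\Gamma\to\Z^{k}\to 1$ gives directly
\[
H^{1}(\Gamma,\C_{\rho})\cong\left(H^{1}(F,\C)\otimes\C_{\rho}\right)^{\Z^{k}},
\]
since $H^{1}(\Z^{k},\C_{\rho})=H^{2}(\Z^{k},\C_{\rho})=0$. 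So it suffices to take $\rho$ to be (the pullback to $\Gamma$ of) the Galois-translated non-unitary joint eigencharacter of the $\Z^{k}$-action on $F^{\rm ab}\otimes\C$ furnished by Kronecker's theorem; a nonzero joint generalized eigenspace contains an honest common eigenvector, so the right-hand side is nonzero and $\rho\in{\mathcal J}_{1}(\Gamma)$ is non-unitary. Finiteness (proved via $(Q,K)$) and existence (proved via $(F,\Z^{k})$) are statements about the same set ${\mathcal J}_{1}(\Gamma)$, so Corollary \ref{FIIIn} applied to the Sasakian finite cover with fundamental group $\Gamma$ yields the desired contradiction. With that repair, and a careful write-up of the ``quasi-unipotent implies virtually nilpotent'' propagation along the lower central series (standard, but it deserves a proof or a reference), your argument is complete.
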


\begin{remark}
In \cite{Che}, Chen  proves that the solvable fundamental group of a compact Sasakian manifold is virtually nilpotent.
Chen's result is based on Campana's results on K\"ahler orbifolds   \cite{Cam}.
Campana uses algebraic and complex analytic geometrical techniques.
In this paper, we only use standard differential geometrical techniques on transversely K\"ahler foliations.
This gives a technical advantage.
In Chen's proof, we need the existence of quasi-regular Sasakian structure on a Sasakian manifold.
On the other hand, in this paper, we do not use quasi-regularity.

\end{remark}

We consider nilmanifolds and solvmanifolds.
Solvmanifolds (resp. nilmanifolds) are compact homogeneous spaces of solvable (resp. nilpotent) Lie groups.
It is known that every nilmanifold can be represented by  $G/\Gamma$ such that $G$ is a simply connected nilpotent Lie group and $\Gamma$ is a lattice in  $G$  (see \cite{R}).

In \cite{Nil}, it is proved that 
a compact $2n+1$-dimensional nilmanifold admits a Sasakian structure if and only if it is a Heisenberg nilmanifold $H_{2n+1}/\Gamma$
where $H_{2n+1}$ is the  $(2n + 1)$-dimensional Heisenberg Lie group and $\Gamma$ is its lattice.

By  Corollary \ref{VNIL}, we can easily extend the result in \cite{Nil} for solvmanifolds as in \cite{Hn}.
\begin{corollary}
A compact $2n+1$-dimensional solvmanifold admitting a Sasakian structure is a finite quotient of  Heisenberg nilmanifold.
\end{corollary}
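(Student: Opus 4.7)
The plan is to reduce to the nilmanifold case using Corollary \ref{VNIL}, and then invoke the result of \cite{Nil}.

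First, write the solvmanifold as $M = G/\Gamma$, where $G$ is a simply connected solvable Lie group and $\Gamma$ is a lattice. Such a $\Gamma$ is torsion-free and polycyclic. Since $M$ is Sasakian, Corollary \ref{VNIL} applies and yields a nilpotent subgroup $\Gamma' \subset \Gamma$ of finite index. The quotient $M' = G/\Gamma'$ is then a finite covering of $M$, and pulling back the Sasakian structure of $M$ makes $M'$ into a compact Sasakian manifold with nilpotent fundamental group $\Gamma'$.

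Next, I would argue that $M'$ is in fact a nilmanifold. By Mal'cev's theorem, the torsion-free finitely generated nilpotent group $\Gamma'$ is a lattice in a (unique) simply connected nilpotent Lie group $N$. By Mostow's rigidity for lattices in simply connected solvable Lie groups, any two such groups containing isomorphic lattices are isomorphic; applying this to the two realizations of $\Gamma'$ as a lattice in $G$ and in $N$ gives an isomorphism $G \cong N$, and consequently a diffeomorphism $M' = G/\Gamma' \cong N/\Gamma'$. Hence $M'$ is a compact nilmanifold carrying a Sasakian structure.

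Now invoke the classification from \cite{Nil}: a $(2n+1)$-dimensional compact nilmanifold that admits a Sasakian structure is of the form $H_{2n+1}/\Lambda$, where $H_{2n+1}$ is the Heisenberg group and $\Lambda$ is a lattice. Therefore $M'$ is a Heisenberg nilmanifold, and $M$ is the quotient of $M'$ by the finite deck transformation group $\Gamma/\Gamma'$, i.e.\ a finite quotient of a Heisenberg nilmanifold, as required.

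The only delicate step in this plan is the passage from ``$M'$ is a solvmanifold with nilpotent fundamental group'' to ``$M'$ is a nilmanifold''; this is where Mostow's uniqueness theorem for simply connected solvable Lie groups with isomorphic lattices is essential (and is exactly the type of argument exploited in \cite{Hn}). Once this structural step is in place, the rest is immediate from Corollary \ref{VNIL} and \cite{Nil}.
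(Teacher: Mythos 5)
Your overall strategy is the same as the paper's: pass to a finite cover with nilpotent fundamental group via Corollary \ref{VNIL}, identify that cover with a nilmanifold, and quote \cite{Nil}. However, the rigidity statement you invoke to make the key structural step is false as stated. It is \emph{not} true that two simply connected solvable Lie groups containing isomorphic lattices must be isomorphic: the standard counterexample is $\R^{3}$ and the universal cover $\widetilde{E}(2)$ of the group of orientation-preserving Euclidean motions of the plane, both of which contain $\Z^{3}$ as a lattice but are not isomorphic (one is nilpotent, the other is not). That kind of group-level rigidity holds only under additional hypotheses, e.g.\ complete solvability --- this is exactly Saito's theorem, which the paper reserves for the \emph{next} corollary, where the completely solvable assumption is imposed precisely because the general statement fails. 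So your deduction ``$G\cong N$, hence $M'=G/\Gamma'$ is a nilmanifold'' does not follow; indeed $G$ need not be nilpotent at all, even though its lattice $\Gamma'$ is.

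The correct tool, and the one the paper actually cites from \cite{R}, is Mostow's theorem at the level of the compact quotients: solvmanifolds with isomorphic fundamental groups are diffeomorphic. Applying this to the two solvmanifolds $G/\Gamma'$ and $N/\Gamma'$ (with $N$ the Mal'cev completion of $\Gamma'$, as in your argument) yields a diffeomorphism $G/\Gamma'\cong N/\Gamma'$ without any claim about the ambient groups, and the Sasakian structure transports across this diffeomorphism, so \cite{Nil} applies. With this substitution your proof is correct and coincides with the paper's. A minor further point: to present $M$ literally as a finite quotient of $M'$ you should replace $\Gamma'$ by its normal core in $\Gamma$ (still nilpotent and of finite index) so that the covering $M'\to M$ is regular.
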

\begin{proof}
It is known that the fundamental group of a compact solvmanifold is a torsion-free polycyclic group and solvmanifolds with isomorphic fundamental groups are diffeomorphic (see \cite{R}).
Hence, by Corollary \ref{VNIL},  we can easily show that a compact $2n+1$-dimensional solvmanifold admitting a Sasakian structure is a finite quotient of a Sasakian nilmanifold.
Thus the Corollary follows from the result in \cite{Nil}.

\end{proof}

In particular, we have the following result.

\begin{corollary}
Let $G$ be a  $2n+1$-dimensional  simply connected  solvable Lie group with a lattice $\Gamma$.
We assume that $G$ is completely solvable (i.e.,  for any $g\in G$, all eigenvalues of the adjoint operator ${\rm Ad}_{g}$ are real).
Then the compact  solvmanifold $G/\Gamma$ admits a Sasakian structure if and only if it is a Heisenberg nilmanifold.
\end{corollary}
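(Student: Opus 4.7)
The plan is to combine the preceding Corollary with a rigidity property of lattices in completely solvable simply connected Lie groups.

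First, by the preceding Corollary there exist a Heisenberg nilmanifold $H_{2n+1}/\Lambda$ and a finite covering $H_{2n+1}/\Lambda\to G/\Gamma$. At the level of fundamental groups, this gives an inclusion $\Lambda\subset \Gamma$ of finite index. Since a finite-index subgroup of a lattice is again a lattice (discreteness and cocompactness are preserved by passing to a finite-index subgroup), $\Lambda$ is discrete and cocompact in $G$ as well. Hence $\Lambda$ appears simultaneously as a lattice in two simply connected completely solvable Lie groups of the same dimension: the Heisenberg group $H_{2n+1}$, which is nilpotent and hence completely solvable, and the given $G$.

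I would then invoke Saito's rigidity theorem for lattices in completely solvable simply connected Lie groups, which extends Mal'cev rigidity from the nilpotent setting: if a discrete group embeds as a lattice in two simply connected completely solvable Lie groups, the two groups are isomorphic. Applied to $\Lambda$, this forces $G\cong H_{2n+1}$, and consequently $\Gamma$ is itself a lattice in $H_{2n+1}$, so that $G/\Gamma$ is a Heisenberg nilmanifold. The converse direction is immediate, as every Heisenberg nilmanifold carries a Sasakian structure by the result of \cite{Nil}.

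The principal subtlety is the correct invocation of the rigidity statement for a common lattice in two Lie groups. Complete solvability of $G$ is essential here, because merely solvable simply connected Lie groups can share a common lattice without being mutually isomorphic; this is exactly why the hypothesis of complete solvability is imposed, even though the preceding Corollary for general Sasakian solvmanifolds does not require it.
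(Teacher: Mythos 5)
Your argument is correct. It differs from the paper's proof only in packaging: the paper does not pass through the finite-quotient corollary at all, but argues directly that, by Corollary \ref{VNIL}, $\Gamma$ has a nilpotent subgroup of finite index, which is then a nilpotent lattice in $G$; it then invokes Saito's rigidity in the weaker form ``a simply connected completely solvable Lie group containing a nilpotent lattice is nilpotent'' to conclude that $G$ is nilpotent, so that $G/\Gamma$ is a Sasakian nilmanifold, and finishes by the classification of Sasakian nilmanifolds in \cite{Nil}. You instead start from the finite covering $H_{2n+1}/\Lambda\to G/\Gamma$ supplied by the previous corollary, observe that $\Lambda$ is simultaneously a lattice in $H_{2n+1}$ and (being of finite index in $\Gamma$) in $G$, and apply the full form of Saito's rigidity theorem (an isomorphism of lattices in simply connected completely solvable groups extends to the groups) to get $G\cong H_{2n+1}$ outright. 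Both routes rest on the same two pillars, Saito rigidity and the result of \cite{Nil} (the latter entering for you through the finite-quotient corollary), but yours extracts more from rigidity, identifying $G$ with $H_{2n+1}$ in one step, while the paper's version only needs the nilpotency consequence of rigidity and then lets \cite{Nil} do the identification. Your closing remark about why complete solvability is indispensable — non-isomorphic merely solvable simply connected groups can share a lattice — is accurate and correctly locates the role of the hypothesis.
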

\begin{proof}
By the Saito's rigidity theorem in \cite{Sai}, if a simply connected completely solvable Lie group  contains a nilpotent lattice, then it is nilpotent.
Hence, if $G/\Gamma$ admits a Sasakian structure, then by Corollary \ref{VNIL} we can easily show that $G$ is nilpotent.
Thus  the Corollary follows from the result in \cite{Nil}.

\end{proof}

{\bf  Acknowledgements.} 

The author  would  like to thank Xiaoyang Chen   for  introducing to his results.
This research is supported by  JSPS Research Fellowships for Young Scientists.

\section{Preliminary}
Let $M$ be a compact  smooth manifold and $A^{\ast}(M)$ the de Rham complex of $M$.
For a $\C$-valued closed $1$-form $\phi\in A^{r}(M)\otimes\C$, we consider the operator $\phi\wedge :A^{r}(M)\otimes\C\to A^{r+1}(M)\otimes\C$ of left-multiplication.
Define $d_{\phi}=d+\phi\wedge$.
Then we have $d_{\phi}d_{\phi}=0$ and hence $(A^{\ast}(M)\otimes\C,d_{\phi})$ is a cochain complex.
We denote by $H^{\ast}(M,\phi)$ the cohomology of  $(A^{\ast}(M)\otimes\C,d_{\phi})$.
The cochain complex $(A^{\ast}(M)\otimes\C,d_{\phi})$ is considered as the de Rham complex with values in the topologically trivial flat bundle $M\times \C$ with the  connection form $\phi$.
Hence the structure of the cochain complex $(A^{\ast}(M)\otimes\C,d_{\phi})$ is determined by the character $\rho_{\phi} : \pi_{1}(M)\to \mathrm{GL}_{1}(\C)$ given by $\rho_\phi(\gamma)=\exp\left({\int_{\gamma}\phi}\right)$.
 We have an isomorphism $H^{1}(M,\phi)\cong H^{\ast}(\pi_{1}(M), \C_{\rho_{\phi}})$.
The map $H^{1}(M,\C)\ni [\phi]\mapsto \rho_{\phi}\in {\mathcal C}(\pi_{1}(M))$ is identified with the map $\mathcal E$ as in Introduction.
Hence the  action of $\R^{\ast}$ on ${\mathcal C}(\pi_{1}(M))$ as in Introduction is given by
\[t\cdot \rho_{\phi}=\rho_{t{\rm  Re}\phi+\sqrt{-1}{\rm Im} \phi}
\]
for $t\in\R^{\ast}$ and $[\phi]\in H^{1}(M,\C)$.

\section{Proof of Theorem \ref{locii}}
\subsection{Basic cohomology}
Let $M$ be a compact $(2n+1)$-dimensional Sasakian manifold with a Sasakian metric $g$ and 
 $\eta$ the contact structure associated with the Sasakian structure.
Take $\xi$ the Reeb vector field.
Let $A^{\ast}(M)$ be the de Rham complex of $M$.
A differential form $\alpha\in A^{\ast}(M)$ is basic if  $\iota_{\xi}\alpha=0$ and $\iota_{\xi}d\alpha=0$.
Denote by $A_{B}^{\ast}(M)$ the differential graded algebra  of the basic differential forms on $M$ and  denote by $H^{\ast}_{B}(M,\R)$ (resp. $H^{\ast}_{B}(M,\C)$) the cohomology of $A_{B}^{\ast}(M)$ (resp. $A_{B}^{\ast}(M)\otimes\C$).
Then it is known that  the inclusion $A_{B}^{\ast}(M)\subset A^{\ast}(M)$ induces a cohomology isomorphism $H^{0}_{B}(M,\R)\cong H^{0}(M,\R)$ and $H^{1}_{B}(M,\R)\cong H^{1}(M,\R)$ (see \cite{BoG}).

Consider the Hodge star operator $\ast:A^{r}(M)\to  A^{2n+1-r}(M)$ for the Sasakian metric $g$.
We define the transverse Hodge star operator $\ast_{T}:A_{B}^{r}(M)\to  A_{B}^{2n-r}(M)$ as $\ast_{T}(\alpha)=\ast(\eta\wedge \alpha)$.
Then we have $\ast_{T}^{2}\alpha=(-1)^{r}\alpha$.
Restricting the  scalar product $\langle ,\rangle:  A^{\ast}(M)\times A^{\ast}(M)\to \R$ on the basic forms  $A_{B}^{\ast}(M)$ we consider formal adjoint $\delta_{B}:A^{r}_{B}(M)\to A^{r-1}_{B}(M)$ of the differential $d$ on $A_{B}^{\ast}(M)$.
Then we have $\delta_{B}=-\ast_{T} d\ast_{T}$.
Consider the basic Laplacian $\Delta_{B}=d\delta_{B}+\delta_{B}d$.
A basic form $\alpha\in A^{r}_{B}(M)$ is harmonic if $\Delta_{B}\alpha=0$.
Denote ${\mathcal H}^{r}_{B}(M)={\rm ker}\, \Delta_{B_{\vert  A_{B}^{r}(M)}}$.
Then we have the Hodge decomposition (see \cite{KT}, \cite{EKA} and \cite{Ti})
\[A^{r}_{B}(M)={\mathcal H}^{r}_{B}(M)\oplus {\rm im} \, d_{\vert  A_{B}^{r-1}(M)}\oplus  {\rm im} \, \delta_{B_{\vert  A_{B}^{r+1}(M)}}.
\]

We have the transverse complex structure on ${\rm ker}\, \iota_{\xi}\subset \bigwedge TM^{\ast}$ and we obtain the bi-grading $A^{r}_{B}(M)\otimes \C=\bigoplus_{p+q=r} A^{p,q}_{B}(M)$ with the bi-differential $d=\partial_{B}+\bar\partial_{B}$.
We consider the formal adjoint $\partial^{\ast}_{B}:A^{p,q}_{B}(M)\to A^{p-1,q}_{B}(M)$ and $\bar\partial_{B}^{\ast}:A^{p,q}_{B}(M)\to A^{p,q-1}_{B}(M)$ of $\partial_{B}$ and $\bar\partial_{B}$ respectively for the restricted Hermitian  inner product  on $A^{\ast}_{B}(M)\otimes \C$.
Then we have $\partial^{\ast}_{B}=-\ast_{T}\bar\partial_{B}\ast_{T}$ and  $\bar\partial^{\ast}_{B}=-\ast_{T}\partial_{B}\ast_{T}$.
Denote $\Delta^{\prime}=\partial_{B}\partial^{\ast}_{B}+\partial^{\ast}_{B}\partial_{B}$ and $\Delta^{\prime\prime}=\bar\partial_{B}\bar\partial^{\ast}_{B}+\bar\partial^{\ast}_{B}\bar\partial_{B}$.

Let $\omega=d\eta$.
Then $\omega$ gives a transverse K\"ahler structure.
We can take a complex coordinate system $(z_{1},\dots, z_{n})$ which is transverse to  $\xi$ such that $\omega$ is a Kahler form on $(z_{1},\dots, z_{n})$.
Define the operator $L:A_{B}^{p,q}(M)\to A^{p+1,q+1}_{B}(M)$ by $L\alpha=\omega\wedge \alpha$ and consider  the formal adjoint $\Lambda:A^{p,q}_{B}(M)\to A^{p-1,q-1}_{B}(M)$ of $L$.
We have $\Lambda=-\ast_{T}L\ast_{T}$.
By using the transverse K\"ahler geometry, we obtain the following K\"ahler identity.
\begin{lemma}\label{id1}
\[\Lambda\partial_{B}-\partial_{B}\Lambda=\sqrt{-1}\bar\partial^{\ast}_{B}\quad \text{ and } \quad \Lambda\bar\partial_{B}-\bar\partial_{B}\Lambda=-\sqrt{-1}\partial^{\ast}_{B}.
\]
\end{lemma}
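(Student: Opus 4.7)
The plan is to reduce the transverse K\"ahler identities to the classical K\"ahler identities on $\C^{n}$ by working pointwise. Both sides of each identity are first-order differential operators on basic forms, so their values at a point $p$ depend only on the coefficients of the tested form and the one-jet of the Sasakian metric at $p$. It therefore suffices to verify the identities at an arbitrary point, after normalizing the local geometry.

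First I would construct transverse K\"ahler normal coordinates near $p$. The Sasakian metric decomposes as $g=\eta\otimes\eta+g_{T}$, where $g_{T}$ is the transverse K\"ahler metric associated with $\omega=d\eta$. Applying the standard existence of K\"ahler normal coordinates on a transverse slice to the Reeb flow, I can find complex coordinates $(z_{1},\dots,z_{n})$ transverse to $\xi$ in which $\omega=\frac{\sqrt{-1}}{2}\sum h_{i\bar j}\,dz_{i}\wedge d\bar z_{j}$ with $h_{i\bar j}(p)=\delta_{ij}$ and $dh_{i\bar j}(p)=0$. Extending along the flow of $\xi$ and adjoining a leaf parameter $t$ yields a foliated chart $(t,z_{1},\dots,z_{n})$ in which $\eta|_{p}=dt|_{p}$, and the transverse geometry matches the Euclidean K\"ahler model to first order.

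In such a chart, a basic form is one whose coefficients are independent of $t$ and which contains no $dt$ factor. Because the Hodge star for $g$ at $p$ splits according to the orthogonal decomposition of the cotangent space and $\ast_{T}\alpha=\ast(\eta\wedge\alpha)$, the operators $\partial_{B},\bar\partial_{B},L$ and $\Lambda=-\ast_{T}L\ast_{T}$ coincide at $p$ with the classical operators $\partial,\bar\partial,L,\Lambda$ on $\bigwedge^{\ast}(\C^{n})^{\ast}$ equipped with the Euclidean K\"ahler structure. The classical identity $[\Lambda,\partial]=\sqrt{-1}\bar\partial^{\ast}$, proved by a direct commutator computation with the wedge and contraction operators associated to $dz_{i}$ and $d\bar z_{i}$, therefore gives $[\Lambda,\partial_{B}]\alpha(p)=\sqrt{-1}\bar\partial_{B}^{\ast}\alpha(p)$ for every basic $\alpha$. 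Since $p$ is arbitrary, this yields the first identity, and the second follows by complex conjugation.

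The main obstacle I expect is setting up the transverse K\"ahler normal coordinates carefully enough that the restricted Hermitian inner product on $A^{\ast}_{B}(M)\otimes\C$ used to define $\partial_{B}^{\ast},\bar\partial_{B}^{\ast}$ is identified, at $p$ and to first order, with the standard inner product on $\bigwedge^{\ast}(\C^{n})^{\ast}$. Once this identification is verified and the formula $\ast_{T}\alpha=\ast(\eta\wedge\alpha)$ is unwound at $p$, the classical flat-model computation transfers verbatim, and no genuinely new calculation is needed.
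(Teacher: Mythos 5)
Your proposal is correct and is essentially the argument the paper has in mind: the paper simply invokes transverse K\"ahler geometry (following \cite{EKA}, \cite{Ti} and the local argument of \cite[Lemma 6.6]{Voi}), which is exactly your reduction, via a foliated chart and transverse K\"ahler normal coordinates, of $\Lambda$, $\partial_{B}$, $\bar\partial_{B}$, $\ast_{T}$ to the classical flat-model operators, using that the adjoints are already expressed pointwise as $\partial^{\ast}_{B}=-\ast_{T}\bar\partial_{B}\ast_{T}$ and $\bar\partial^{\ast}_{B}=-\ast_{T}\partial_{B}\ast_{T}$. No essential difference in route, so nothing further to add.
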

This implies $\Delta_{B}=2\Delta^{\prime}_{B}=2\Delta_{B}^{\prime\prime}$ and hence we have the Hodge structure
\[{\mathcal H}^{r}_{B}(M)\otimes\C=\bigoplus_{p+q=r}{\mathcal H}^{p,q}_{B}(M) \quad \text{ and } \quad \overline{{\mathcal H}^{p,q}_{B}(M) }={\mathcal H}^{q,p}_{B}(M) 
\]
where ${\mathcal H}^{p,q}_{B}(M)={\rm ker}\, \Delta^{\prime}_{B_{\vert A^{p,q}_{B}(M)}}={\rm ker}\, \Delta^{\prime\prime}_{B_{\vert A^{p,q}_{B}(M)}}$.

\subsection{Twisted basic cohomology}
Let $\phi\in A_{B}^{1}(M)\otimes \C$ be a closed basic $1$-form.
Then $(A_{B}^{\ast}(M)\otimes \C,d_{\phi})$ is a cochain complex.
Denote by $H^{\ast}_{B}(M,\phi)$ the cohomology of this complex.
It is known that there exists a sub-torus $\mathcal T\subset {\rm Isom}(M,g)$ such that $A^{\ast}_{B}(M)\otimes \C\subset (A^{\ast}(M)\otimes \C)^{\mathcal T}$ and we have the exact sequence of complexes 
\[\xymatrix{
0\ar[r]&A^{\ast}_{B}(M)\otimes \C\ar[r]&(A^{\ast}(M)\otimes \C)^{\mathcal T}\ar[r]&A^{\ast-1}_{B}(M)\otimes \C\ar[r]&0
}
\]
for the usual differential $d$ (see \cite[Section 7.2.1]{BoG}).
We can say that this is also exact for twisted differential  $d_{\phi}$.
Hence, taking the long exact sequence, we have the  exact sequence
\[\xymatrix{
0\ar[r]&H^{1}_{B}(M,\phi)\ar[r]&H^{1}(M,\phi)\ar[r]&H^{0}_{B}(M,\phi).
}
\]
We can easily check 
\[H^{0}_{B}(M,\phi)=H^{0}(M,\phi)=0
\]
and hence we have:
\begin{lemma}\label{tw1}
\[H^{1}_{B}(M,\phi)\cong H^{1}(M,\phi).
\]
\end{lemma}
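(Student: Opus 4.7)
The plan is to derive the isomorphism directly from the four-term sequence
$$0\to H^1_B(M,\phi)\to H^1(M,\phi)\to H^0_B(M,\phi)$$
already assembled just above. The only remaining content is: (i) justifying exactness of the short sequence of complexes for the twisted differential, (ii) identifying the middle term of the resulting long exact sequence with $H^*(M,\phi)$, and (iii) proving the vanishing of $H^0_B(M,\phi)$.

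For (i), the inclusion $A_B^*\otimes\C\hookrightarrow (A^*)^{\mathcal T}$ is clearly a $d_\phi$-chain map, since $\phi$ is basic, hence $\mathcal T$-invariant, so $\phi\wedge$ preserves both subcomplexes. For the projection, essentially $\iota_\xi$ with $\xi$ the Reeb vector field (which lies in the Lie algebra of $\mathcal T$), on a $\mathcal T$-invariant form $\alpha$ I compute
$$\iota_\xi d_\phi\alpha = \iota_\xi d\alpha + (\iota_\xi\phi)\alpha - \phi\wedge\iota_\xi\alpha = -d\iota_\xi\alpha - \phi\wedge\iota_\xi\alpha = -d_\phi\iota_\xi\alpha,$$
using $L_\xi\alpha=0$ and $\iota_\xi\phi=0$ (since $\phi$ is basic). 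Up to the sign convention appropriate to the shifted target $A^{*-1}_B$, this is a chain map, so exactness is preserved. For (ii), I would use that $d_\phi$ is $\mathcal T$-equivariant (because $\phi$ is $\mathcal T$-invariant), so the averaging projector $\alpha\mapsto\int_{\mathcal T}t^*\alpha\,d\mu(t)$ is a $d_\phi$-chain projection onto the invariant subcomplex; connectedness of $\mathcal T$ produces a Cartan-style chain homotopy from the identity to this projection, making the inclusion a quasi-isomorphism.

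The step I anticipate as the main obstacle is (iii): showing $H^0_B(M,\phi)=H^0(M,\phi)=0$. A $d_\phi$-closed function is a smooth $\C$-valued $f$ with $df+f\phi=0$; if $f\not\equiv 0$, ODE uniqueness along integral curves of a frame forces $f$ to be nowhere-vanishing on the connected compact $M$, whereupon $-\phi=d\log f$ is globally exact. Hence nontrivial solutions require $[\phi]=0$, in which case a gauge transformation $f\mapsto e^g f$ with $dg=-\phi$ identifies $d_\phi$ with $d$ and reduces the asserted isomorphism to the untwisted $H^1_B(M,\C)\cong H^1(M,\C)$ recorded in Section 3.1. Substituting the vanishing back into the long exact sequence collapses it to the displayed four-term form with vanishing right end, yielding $H^1_B(M,\phi)\cong H^1(M,\phi)$.
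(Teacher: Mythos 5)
Your skeleton is exactly the paper's: the Boyer--Galicki sequence $0\to A_B^{\ast}(M)\otimes\C\to(A^{\ast}(M)\otimes\C)^{\mathcal T}\to A_B^{\ast-1}(M)\otimes\C\to0$, its $d_{\phi}$-version, the induced long exact sequence, and the vanishing of $H^{0}_{B}(M,\phi)$. Your extra care in (iii) actually repairs a point the paper glosses over: when the character $\rho_{\phi}$ is trivial one has $H^{0}(M,\phi)\neq0$ and one must argue by gauge equivalence, as you do. Two small corrections there: what your ODE argument yields is triviality of $\rho_{\phi}$ (periods of $\phi$ in $2\pi\sqrt{-1}\,\Z$), not $[\phi]=0$, and a global $g$ with $dg=-\phi$ need not exist; the correct gauge is multiplication by the nowhere-vanishing solution $f$ itself, which intertwines $d$ and $d_{\phi}$ and preserves the basic subcomplex because $df=-f\phi$ with $\phi$ basic forces $f$ to be basic.

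The genuine gap is in your step (ii). $\mathcal T$-invariance of $\phi$ alone does not produce a Cartan-style $d_{\phi}$-homotopy from the identity to the averaging projector: for $X\in\mathrm{Lie}(\mathcal T)$ one has $d_{\phi}\iota_{X}+\iota_{X}d_{\phi}=L_{X}+(\iota_{X}\phi)$, and the zeroth-order term $\iota_{X}\phi$ (a constant, since $d\phi=0$ and $L_{X}\phi=0$) obstructs the argument. Indeed the statement you assert is false in general: on $S^{1}$ with $\mathcal T=S^{1}$ and $\phi=\sqrt{-1}\,k\,d\theta$, $k\in\Z\setminus\{0\}$, the form $\phi$ is invariant and $H^{\ast}(S^{1},\phi)\cong H^{\ast}(S^{1},\C)\neq0$, yet the invariant twisted complex is acyclic (contract with $\tfrac{1}{\sqrt{-1}k}\iota_{\partial_{\theta}}$), so the inclusion of invariants is not a quasi-isomorphism. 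Hence you must prove $\iota_{X}\phi=0$ for \emph{every} $X\in\mathrm{Lie}(\mathcal T)$, not only for $\xi$; this is automatic only when the structure is quasi-regular and $\mathcal T$ is the Reeb circle, whereas for irregular Sasakian structures $\mathcal T$ is strictly larger. The vanishing does hold here, but needs an argument: since $L_{X}\eta=0$ and $L_{X}\phi=0$, the function $c=\iota_{X}\phi$ is constant and $\iota_{X}d\eta=-d(\eta(X))$; expanding $0=\iota_{X}\bigl(\phi\wedge\eta\wedge(d\eta)^{n}\bigr)$, using that every basic $(2n+1)$-form vanishes (so $\phi\wedge(d\eta)^{n}=0$) and applying Stokes to $d\bigl(\eta(X)\,\phi\wedge\eta\wedge(d\eta)^{n-1}\bigr)$, one gets $c\int_{M}\eta\wedge(d\eta)^{n}=0$, hence $c=0$. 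With that in hand the twisted Cartan formula reduces to $L_{X}$, your homotopy argument works, and the rest of your proof (which coincides with the paper's, where this identification of the middle term with $H^{1}(M,\phi)$ is left implicit and equally unaddressed) goes through.
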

Consider the formal adjoint $(\phi\wedge)^{\ast}_{B}:A^{r}_{B}(M)\otimes \C\to A^{r-1}_{B}(M)\otimes \C$ of the operator $\phi\wedge$  for the restricted Hermitian  inner product  on $A^{\ast}_{B}(M)\otimes \C$.
Then we have 
\[(\phi\wedge)^{\ast}_{B}=\ast_{T}(\bar\phi\wedge)\ast_{T}
\]
(see the proof of \cite[Corollary 2.3]{PR}).
Taking the formal adjoint $(\phi\wedge)^{\ast}:A^{r}(M)\otimes \C\to A^{r-1}(M)\otimes \C$ on the usual de Rham complex $A^{\ast}(M)\otimes\C$, we have
\[(\phi\wedge)^{\ast}_{\vert A^{\ast}(M)\otimes \C}=(\phi\wedge)^{\ast}_{B}.
\]
Let $\delta_{B,\phi}=\delta_{B}+(\phi\wedge)^{\ast}$, $\Delta_{B,\phi}=d_{\phi}\delta_{B,\phi}+\delta_{B,\phi}d_{\phi}$ and ${\rm ker}\, \Delta_{B,\phi_{\vert A^{r}(M)\otimes \C}}={\mathcal H}^{r}_{B}(M,\phi)$.
As in \cite{KT}, \cite{EKA} and \cite{Ti}, we have the Hodge decomposition
\[ A^{r}_{B}(M)\otimes \C={\mathcal H}^{r}_{B}(M,\phi)\oplus {\rm im}\, d_{\phi_{\vert A_{B}^{r}(M)\otimes \C}}\oplus  {\rm im}\, \delta_{\phi_{\vert A_{B}^{r}(M)\otimes \C}}.
\]

Consider the double complex $(A^{\ast,\ast}_{B}(M),\partial_{B},\bar\partial_{B})$.
For a $(1,0)$-basic form $\theta$, considering the operators $\theta\wedge:A^{p,q}_{B}(M)\to A^{p+1,q}_{B}(M)$ and $\Lambda:A^{p,q}_{B}(M)\to A^{p-1,q-1}_{B}(M)$,
as the local argument for the K\"ahler identities, see, e.g., \cite[Lemma 6.6]{Voi}, we have the following identity.
\begin{lemma}\label{id2}
\[\Lambda(\theta\wedge) -(\theta\wedge) \Lambda=-\sqrt{-1}\ast_{T}(\theta\wedge)\ast_{T}\quad \text{ and } \quad \Lambda(\bar\theta\wedge) -(\bar\theta\wedge) \Lambda=\sqrt{-1}\ast_{T}(\bar\theta\wedge)\ast_{T}.
\]
\end{lemma}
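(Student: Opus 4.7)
The plan is to reduce Lemma \ref{id2} to a pointwise identity on the exterior algebra of a Hermitian vector space, so that it follows from the standard (flat) K\"ahler identity of \cite[Lemma 6.6]{Voi}. The key observation is that every operator appearing in the statement, namely $\Lambda$, $\theta\wedge$, $\bar\theta\wedge$, and $\ast_{T}$, is $C^{\infty}(M)$-linear: no derivative occurs, $\ast_{T}$ is defined fibrewise from the transverse Hermitian structure on $\ker\eta$, and $\Lambda=-\ast_{T}L\ast_{T}$ is built from $\omega\wedge$ and $\ast_{T}$. Hence it suffices to check the identity at each point $x\in M$.

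For the pointwise model, I would use the splitting $T_{x}M=\R\xi\oplus\ker\eta_{x}$ induced by the Sasakian structure. The subspace $\ker\eta_{x}$ carries the transverse complex structure together with the restriction of the metric $g$, making it a Hermitian vector space whose K\"ahler form is $\omega_{x}|_{\ker\eta_{x}}$. Under this identification, a basic form at $x$ is just an element of $\bigwedge^{*}(\ker\eta_{x})^{*}\otimes\C$, a basic $(1,0)$-form $\theta_{x}$ is a $(1,0)$-covector, and $\ast_{T}=\ast\,(\eta\wedge\cdot)$ becomes the usual Hodge star operator of the transverse Hermitian vector space (since $\eta\wedge$ just tensors in the unit Reeb covector before applying $\ast$).

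Once this identification is in place, the first claim of Lemma \ref{id2} becomes the standard pointwise identity
\[\Lambda(\theta\wedge)-(\theta\wedge)\Lambda=-\sqrt{-1}\,\ast(\theta\wedge)\ast\]
on a finite-dimensional Hermitian vector space, which is proved by the local calculation at a point of a flat K\"ahler model carried out in \cite[Lemma 6.6]{Voi}; one applies that statement verbatim to the transverse Hermitian vector space $\ker\eta_{x}$. The second identity, involving $\bar\theta$, is obtained by conjugating the first, using that $\Lambda$ and $\ast_{T}$ are real operators while complex conjugation interchanges $(p,q)$ with $(q,p)$ and flips the sign of $\sqrt{-1}$.

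The only real obstacle is bookkeeping: one has to check that $\ast_{T}$ at a point is indeed the Hodge star of the transverse Hermitian vector space (immediate from the orthogonal decomposition above and $|\xi|=1$) and that the sign conventions used in \cite{Voi} match those in Lemma \ref{id1}. No foliated-analysis input is required for the proof of the lemma itself; the transverse K\"ahler geometry enters only to justify the pointwise model on $\ker\eta_{x}$.
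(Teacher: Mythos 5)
Your proposal is correct and follows essentially the same route as the paper, which proves the lemma exactly by the pointwise ("local") argument for the K\"ahler identities, citing \cite[Lemma 6.6]{Voi} applied to the transverse Hermitian structure; your reduction via $C^{\infty}(M)$-linearity of $\Lambda$, $\theta\wedge$, and $\ast_{T}$, and the conjugation argument for the $\bar\theta$ identity, are exactly the intended justification (and any sign ambiguity in identifying $\ast_{T}$ with the model Hodge star is harmless since $\ast_{T}$ occurs twice).
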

Let $\phi\in {\mathcal H}^{1}_{B}(M)\otimes\C$.
By ${\mathcal H}^{1}_{B}(M)={\mathcal H}^{1,0}_{B}(M)\oplus {\mathcal H}^{0,1}_{B}(M)$, we can take unique $\theta_{1},\theta_{2}\in {\mathcal H}^{1,0}_{B}(M)$ such that
$\phi=\theta_{1}+\bar\theta_{1}+\theta_{2}-\bar\theta_{2}$.
Define
\[\partial_{B,\theta_{1},\theta_{2}}=\partial_{B}+\theta_{2}\wedge+\bar\theta_{1}\wedge \quad \text{ and } \quad \bar\partial_{B,\theta_{1},\theta_{2}}=\bar\partial_{B}-\bar\theta_{2}\wedge +\theta_{1}\wedge.
\]
By ${\mathcal H}^{p,q}_{B}(M)={\rm ker}\, \Delta^{\prime}_{B_{\vert A^{p,q}_{B}(M)}}={\rm ker}\, \Delta^{\prime\prime}_{B_{\vert A^{p,q}_{B}(M)}}$,
$(A_{B}^{\ast}(M)\otimes \C, \partial_{B,\theta_{1},\theta_{2}})$ and $(A_{B}^{\ast}(M)\otimes \C, \bar\partial_{B,\theta_{1},\theta_{2}})$ are cochain complexes.
Denote by $H^{\ast}_{B}(M,\theta_{1},\theta_{2})$ the cohomology of $(A_{B}^{\ast}(M)\otimes \C, \bar\partial_{B,\theta_{1},\theta_{2}})$.
As similar to \cite[Lemma 2.1]{KHH}, we obtain the following lemma.
\begin{lemma}\label{RRA}
For any $t\in \R^{\ast}$, we have
\[\dim H^{\ast}_{B}(M,\theta_{1},\theta_{2})=\dim H^{\ast}_{B}(M,t\theta_{1},\theta_{2}).
\]
\end{lemma}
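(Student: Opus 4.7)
The plan is to produce an explicit chain isomorphism between $(A_B^{\ast}(M)\otimes \C, \bar\partial_{B,\theta_1,\theta_2})$ and $(A_B^{\ast}(M)\otimes \C, \bar\partial_{B,t\theta_1,\theta_2})$, from which equality of cohomology dimensions follows immediately. The key insight is that replacing $\theta_1$ by $t\theta_1$ merely rescales the one summand of $\bar\partial_{B,\theta_1,\theta_2}$ that shifts the transverse Hodge bidegree by $(1,0)$, so a diagonal rescaling on the bigraded complex should intertwine the two differentials.

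First, using the transverse Hodge bigrading $A_B^r(M)\otimes\C=\bigoplus_{p+q=r}A_B^{p,q}(M)$, I would define an automorphism $\psi_t$ of $A_B^{\ast}(M)\otimes \C$ by $\psi_t(\alpha)=t^p\alpha$ for $\alpha\in A_B^{p,q}(M)$; for $t\in\R^{\ast}$ this is manifestly a linear automorphism. Next, I would verify the intertwining identity $\psi_t\circ\bar\partial_{B,\theta_1,\theta_2}=\bar\partial_{B,t\theta_1,\theta_2}\circ\psi_t$ by inspecting each of the three summands of $\bar\partial_{B,\theta_1,\theta_2}=\bar\partial_B+\theta_1\wedge-\bar\theta_2\wedge$: the operators $\bar\partial_B$ and $\bar\theta_2\wedge$ preserve the $p$-index and so commute with $\psi_t$, whereas $\theta_1\wedge$ raises $p$ by one, giving $\psi_t\circ(\theta_1\wedge)=(t\theta_1\wedge)\circ\psi_t$. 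Assembling these three relations yields the intertwining, and $\psi_t$ then descends to an isomorphism $H^{\ast}_{B}(M,\theta_1,\theta_2)\cong H^{\ast}_{B}(M,t\theta_1,\theta_2)$, which gives the claimed equality of dimensions.

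The only ingredient needed from the preceding discussion is that both $\bar\partial_{B,\theta_1,\theta_2}$ and $\bar\partial_{B,t\theta_1,\theta_2}$ genuinely square to zero, which follows from the harmonicity of $\theta_1$ and $\theta_2$ (so that $\bar\partial_B\theta_1=0$ and $\bar\partial_B\bar\theta_2=0$) together with the identity ${\mathcal H}^{p,q}_{B}(M)=\ker\Delta^{\prime}_{B_{\vert A^{p,q}_{B}(M)}}=\ker\Delta^{\prime\prime}_{B_{\vert A^{p,q}_{B}(M)}}$ already recorded in the paper. Since the intertwining is a purely combinatorial consequence of the bigrading, no Hodge decomposition, K\"ahler identity, or other analytic machinery enters the argument itself. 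The only thing requiring real care is correctly tracking the bidegree of each summand of $\bar\partial_{B,\theta_1,\theta_2}$; beyond this routine bookkeeping I do not anticipate any genuine obstacle.
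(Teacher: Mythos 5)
Your proof is correct: $\bar\partial_{B}$ and $\bar\theta_{2}\wedge$ preserve the transverse holomorphic degree $p$ while $\theta_{1}\wedge$ raises it by one, so the weight operator $\psi_{t}(\alpha)=t^{p}\alpha$ on $A^{p,q}_{B}(M)$ is a total-degree-preserving chain isomorphism from $(A_{B}^{\ast}(M)\otimes \C,\bar\partial_{B,\theta_{1},\theta_{2}})$ to $(A_{B}^{\ast}(M)\otimes \C,\bar\partial_{B,t\theta_{1},\theta_{2}})$, and both operators do square to zero since the harmonic forms $\theta_{1}$ and $\bar\theta_{2}$ are $\bar\partial_{B}$-closed. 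This is essentially the paper's approach: the paper offers no argument beyond invoking the analogous \cite[Lemma 2.1]{KHH}, and the bidegree-rescaling automorphism you spell out is precisely the standard mechanism behind that statement (and, as a small bonus, your argument visibly works for every $t\in\C^{\ast}$, the restriction to $\R^{\ast}$ being needed only later to relate $t\theta_{1}+t\bar\theta_{1}+\theta_{2}-\bar\theta_{2}$ back to the character action).
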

Consider the formal adjoint 
\[\partial_{B,\theta_{1},\theta_{2}}^{\ast}=\partial_{B}^{\ast}+(\theta_{2}\wedge)^{\ast}+(\bar\theta_{1}\wedge)^{\ast} \quad \text{ and } \quad \bar\partial^{\ast}_{B,\theta_{1},\theta_{2}}=\bar\partial^{\ast}_{B}-(\bar\theta_{2}\wedge)^{\ast} +(\theta_{1}\wedge)^{\ast}.
\]
Since we have $(\theta_{i}\wedge)^{\ast}=\ast_{T}(\bar\theta_{i}\wedge)\ast_{T}$,
by the Lemma \ref{id1} and \ref{id2},
we obtain the following K\"ahler identity (cf. \cite[Section 1.2]{AK}, \cite[Page 15]{Sim}).
\begin{lemma}\label{id3}
\[\Lambda\partial_{B,\theta_{1},\theta_{2}}-\partial_{B,\theta_{1},\theta_{2}}\Lambda=\sqrt{-1}\bar\partial^{\ast}_{B,\theta_{1},\theta_{2}}\quad \text{ and } \quad \Lambda\bar\partial_{B,\theta_{1},\theta_{2}}-\bar\partial_{B,\theta_{1},\theta_{2}}\Lambda=-\sqrt{-1}\partial^{\ast}_{B,\theta_{1},\theta_{2}}.
\]
\end{lemma}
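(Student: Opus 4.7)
The plan is to expand both sides of each identity according to the definitions of $\partial_{B,\theta_1,\theta_2}$, $\bar\partial_{B,\theta_1,\theta_2}$ and their formal adjoints, and then verify the identity term by term using Lemmas \ref{id1} and \ref{id2} together with the formula $(\theta_i\wedge)^{\ast}=\ast_T(\bar\theta_i\wedge)\ast_T$ (equivalently $(\bar\theta_i\wedge)^{\ast}=\ast_T(\theta_i\wedge)\ast_T$).

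Concretely, focusing on the first identity, I would write
\[
\Lambda\partial_{B,\theta_1,\theta_2}-\partial_{B,\theta_1,\theta_2}\Lambda
=\bigl(\Lambda\partial_B-\partial_B\Lambda\bigr)
+\bigl(\Lambda(\theta_2\wedge)-(\theta_2\wedge)\Lambda\bigr)
+\bigl(\Lambda(\bar\theta_1\wedge)-(\bar\theta_1\wedge)\Lambda\bigr).
\]
Lemma \ref{id1} rewrites the first bracket as $\sqrt{-1}\bar\partial^{\ast}_B$. Lemma \ref{id2} applied to the $(1,0)$-form $\theta_2$ gives $-\sqrt{-1}\ast_T(\theta_2\wedge)\ast_T$; this is precisely $-\sqrt{-1}(\bar\theta_2\wedge)^{\ast}$ by the formula for adjoints. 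Lemma \ref{id2} applied to $\theta_1$ and conjugated (so that $\bar\theta_1$ takes the role of $\bar\theta$) gives $\sqrt{-1}\ast_T(\bar\theta_1\wedge)\ast_T=\sqrt{-1}(\theta_1\wedge)^{\ast}$. Summing, the three contributions amount to $\sqrt{-1}\bigl(\bar\partial^{\ast}_B-(\bar\theta_2\wedge)^{\ast}+(\theta_1\wedge)^{\ast}\bigr)=\sqrt{-1}\bar\partial^{\ast}_{B,\theta_1,\theta_2}$, as required.

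The second identity is handled in exactly the same way: decompose $\bar\partial_{B,\theta_1,\theta_2}=\bar\partial_B-\bar\theta_2\wedge+\theta_1\wedge$, apply the second half of Lemma \ref{id1} for the $\bar\partial_B$ contribution, and use Lemma \ref{id2} for the multiplications by $\bar\theta_2$ and $\theta_1$; the signs line up to give exactly $-\sqrt{-1}\partial^{\ast}_{B,\theta_1,\theta_2}$. Alternatively one can simply take complex conjugates of the first identity.

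There is no real obstacle here: the lemma is a purely algebraic/pointwise consequence of two transverse Kähler identities already established (Lemmas \ref{id1} and \ref{id2}), together with the explicit form of the adjoint of multiplication by a basic $(1,0)$- or $(0,1)$-form. The only place where one has to be slightly careful is in tracking the signs coming from Lemma \ref{id2} for $(1,0)$-type versus $(0,1)$-type forms, and in verifying that the two sign differences conspire with the sign in the definitions $+\theta_2\wedge+\bar\theta_1\wedge$ and $-\bar\theta_2\wedge+\theta_1\wedge$ to reproduce exactly the adjoints $\bar\partial^{\ast}_{B,\theta_1,\theta_2}$ and $\partial^{\ast}_{B,\theta_1,\theta_2}$ on the right-hand side.
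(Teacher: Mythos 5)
Your term-by-term expansion is correct and is precisely the argument the paper intends: the paper proves Lemma \ref{id3} by simply invoking Lemma \ref{id1}, Lemma \ref{id2} and the adjoint formula $(\theta_{i}\wedge)^{\ast}=\ast_{T}(\bar\theta_{i}\wedge)\ast_{T}$, which is exactly what you carry out explicitly. (Only a minor caveat: your alternative of taking complex conjugates of the first identity actually yields the second identity with $\theta_{2}$ replaced by $-\theta_{2}$, so it requires that substitution, harmless since the identity holds for arbitrary $\theta_{1},\theta_{2}$; your direct computation of the second identity is fine as written.)
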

Define 
\[\Delta^{\prime}_{B,\theta_{1},\theta_{2}}=\partial_{B,\theta_{1},\theta_{2}}\partial^{\ast}_{B,\theta_{1},\theta_{2}}+\partial^{\ast}_{B,\theta_{1},\theta_{2}}\partial_{B,\theta_{1},\theta_{2}}\quad \text{ and } \quad\Delta^{\prime\prime}_{B,\theta_{1},\theta_{2}}=\bar\partial_{B,\theta_{1},\theta_{2}}\bar\partial^{\ast}_{B,\theta_{1},\theta_{2}}+\bar\partial^{\ast}_{B,\theta_{1},\theta_{2}}\bar\partial_{B,\theta_{1},\theta_{2}}.\]
Then Lemma \ref{id3} implies $\Delta_{B,\phi}=2\Delta^{\prime\prime}_{B,\theta_{1},\theta_{2}}=2\Delta^{\prime}_{B,\theta_{1},\theta_{2}}$.
Denote ${\mathcal H}^{r}_{B}(M,\theta_{1},\theta_{2})=\Delta^{\prime\prime}_{B,\theta_{1},\theta_{2}\vert A_{B}^{r}(M)\otimes \C}$.
We have the Hodge decomposition
\[A_{B}^{r}(M)\otimes \C={\mathcal H}^{r}_{B}(M,\theta_{1},\theta_{2})\oplus {\rm im}\,\bar\partial_{B,\theta_{1},\theta_{2}\vert A_{B}^{r-1}(M)\otimes \C}\oplus {\rm im}\,\bar\partial^{\ast}_{B,\theta_{1},\theta_{2}\vert A_{B}^{r+1}(M)\otimes \C}.
\]
Hence we obtain  an isomorphism
\[ H^{r}_{B}(M,\phi)\cong H^{r}_{B}(M,\theta_{1},\theta_{2}).
\]
By Lemma \ref{tw1} and \ref{RRA}, we have the following result.
\begin{theorem}
For any $t\in \R^{\ast}$, we have the equation
\[\dim  H^{1}(M,\phi)=\dim H^{1}(M,t\theta_{1}+t\bar\theta_{1}+\theta_{2}-\bar\theta_{2}).\]

\end{theorem}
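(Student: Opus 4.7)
The plan is to compose the three isomorphisms already established in this section: Lemma \ref{tw1} (basic $=$ ordinary twisted cohomology in degree 1), the Hodge-theoretic isomorphism $H^{r}_{B}(M,\phi)\cong H^{r}_{B}(M,\theta_{1},\theta_{2})$ stated immediately before the theorem, and Lemma \ref{RRA} (the $\R^{\ast}$-scaling identity for $\theta_{1}$).

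Let me set $\phi_{t}=t\theta_{1}+t\bar\theta_{1}+\theta_{2}-\bar\theta_{2}$ for $t\in\R^{\ast}$. The first observation I would make is that $\phi_{t}$ lies in $\mathcal{H}^{1}_{B}(M)\otimes\C$ (since $\mathcal{H}^{1,0}_{B}(M)$ and $\mathcal{H}^{0,1}_{B}(M)$ are complex vector spaces and $t\theta_{1}\in\mathcal{H}^{1,0}_{B}(M)$), and moreover the canonical decomposition of $\phi_{t}$ in the form $\theta'_{1}+\bar\theta'_{1}+\theta'_{2}-\bar\theta'_{2}$ is precisely $(\theta'_{1},\theta'_{2})=(t\theta_{1},\theta_{2})$, because $t\in\R$ ensures $\overline{t\theta_{1}}=t\bar\theta_{1}$.

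Now the chain of equalities runs as follows. By Lemma \ref{tw1} applied to $\phi_{t}$ (noting that $\phi_{t}$ is a closed basic $1$-form),
\[
\dim H^{1}(M,\phi_{t}) = \dim H^{1}_{B}(M,\phi_{t}).
\]
By the isomorphism $H^{r}_{B}(M,\psi)\cong H^{r}_{B}(M,\theta'_{1},\theta'_{2})$ applied with $\psi=\phi_{t}$ and the decomposition above,
\[
\dim H^{1}_{B}(M,\phi_{t}) = \dim H^{1}_{B}(M,t\theta_{1},\theta_{2}).
\]
By Lemma \ref{RRA},
\[
\dim H^{1}_{B}(M,t\theta_{1},\theta_{2}) = \dim H^{1}_{B}(M,\theta_{1},\theta_{2}).
\]
Finally running the first two steps backwards with $t=1$ (so $\phi_{1}=\phi$),
\[
\dim H^{1}_{B}(M,\theta_{1},\theta_{2}) = \dim H^{1}_{B}(M,\phi) = \dim H^{1}(M,\phi).
\]

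There is no real obstacle left; the content of the theorem has already been absorbed into Lemmas \ref{tw1}, \ref{RRA} and the preceding Hodge-theoretic isomorphism. The only thing one must verify carefully is the compatibility statement made above, namely that the canonical $(\theta_{1},\theta_{2})$-decomposition of the scaled form $\phi_{t}$ is exactly $(t\theta_{1},\theta_{2})$, so that Lemma \ref{RRA} can be applied with the right parameters. Once that bookkeeping is in place, the theorem follows immediately by stringing the four displays together.
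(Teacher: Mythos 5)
Your proposal is correct and follows essentially the same route as the paper: the theorem is exactly the concatenation of Lemma \ref{tw1}, the harmonic-space isomorphism $H^{r}_{B}(M,\psi)\cong H^{r}_{B}(M,\theta'_{1},\theta'_{2})$ coming from $\Delta_{B,\psi}=2\Delta''_{B,\theta'_{1},\theta'_{2}}$, and Lemma \ref{RRA}, applied both to $\phi$ and to $\phi_{t}$. Your explicit check that the canonical decomposition of $\phi_{t}$ is $(t\theta_{1},\theta_{2})$ (using $t\in\R$) is precisely the bookkeeping the paper leaves implicit.
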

Since we have isomorphisms $H^{1}(M,\C)\cong H^{1}_{B}(M,\C)\cong {\mathcal H}^{1}_{B}(M)\otimes\C$, 
we obtain Theorem \ref{locii}.

\section{Proof of Corollary \ref{VNIL}}
Let $G$ be a simply connected solvable Lie group and $N$ be the nilradical (i.e. maximal connected nilpotent normal subgroup) of $G$.
Denote by $\g$ the Lie algebra of $G$.
Then we can take a simply connected nilpotent subgroup $C \subset G$ such that $G = C \cdot N$ (see \cite[Proposition 3.3]{dek}).
Since $C$ is nilpotent, the map 
\[\Phi: C\ni c\mapsto ({\rm Ad}_{c})_{s}\in {\rm Aut}(\g)
\]
is a diagonalizable representation
where $({\rm Ad}_{c})_{s}$ is the semi-simple part of the adjoint operator ${\rm Ad}_{c}$ (see \cite{KDD}). 
We take a diagonalization $\Phi={\rm diag}(\alpha_{1},\dots,\alpha_{n})$ where $\alpha_{1},\dots,\alpha_{n}$ are $\C$-valued characters of $C$.
Since the adjoint representation ${\rm Ad}$ on the nilradical $N$ is unipotent and we have an isomorphism $G/N\cong C/C\cap N$,  $\alpha_{1},\dots,\alpha_{n}$ are considered as characters of $G$.
For each $g\in G$, $\alpha_{i}(g)$ is an eigenvalue of ${\rm Ad}_{g}$.

Suppose $G$ admits a lattice $\Gamma$. 
We consider the solvmanifold $G/\Gamma$.
$G/\Gamma$ is an aspherical manifold with the fundamental group $\Gamma$.
In \cite{KHH}, the set ${\mathcal J}_{1}(\Gamma) $ was studied.
The author proved that ${\mathcal J}_{1}(\Gamma) $ is a finite set (\cite[Corollary 5.9]{KHH}) and if one of the characters $\alpha_{1},\dots,\alpha_{n}$ is non-unitary, then there exists a non-unitary character $\rho\in {\mathcal C}(\Gamma) $ of $\Gamma$ such that $\rho\in {\mathcal J}_{1}(\Gamma) $ (\cite[Corollary 5.10]{KHH}).
We suppose that $\Gamma$ can be the fundamental group of a compact Sasakian manifold.
Then by Corollary \ref{FIIIn}, characters $\alpha_{1},\dots,\alpha_{n}$ are all unitary.
Hence, for any $g\in G$, all eigenvalues of ${\rm Ad}_{g}$ are unitary.
In this case, a lattice $\Gamma$ of $G$ is virtually nilpotent (see \cite[Chapter IV. 5]{Aus}).
It is known that every polycyclic group contains a lattice of some simply connected solvable Lie group as a finite index normal subgroup (see \cite[Theorem 4.28]{R}). 
Hence Corollary \ref{VNIL} follows.


\begin{thebibliography}{40}


\bibitem{AK}
D. Angella, H. Kasuya, Hodge theory for twisted differentials. Complex Manifolds {\bf 1} (2014), 64--85. 
\bibitem{AN}D. Arapura, M. Nori,   Solvable fundamental groups of algebraic varieties and K\"ahler manifolds, \textit{Compositio Math.} {\bf 116} (1999), no. 2, 173--188.


\bibitem{Aus}
L. Auslander,  An exposition of the structure of solvmanifolds, I. Algebraic theory.,  Bull. Amer. Math. Soc. {\bf 79}  (1973), no. 2, 227--261.



\bibitem{BoG}
 C. P. Boyer, K. Galicki,  Sasakian geometry. Oxford Mathematical Monographs. Oxford University Press, Oxford, 2008. 

\bibitem{Cam}
F. Campana, Quotients r\'esoluble ou nilpotents des groupes de K\"ahler orbifoldes, Manus. Math. {\bf 135}(1–2) (2011), 117--150.
\bibitem{Nil}
B. Cappelletti-Montano, A. De Nicola, J. C. Marrero, I. Yudin, 
Sasakian nilmanifolds. Int Math Res Notices (2014)
doi: 10.1093/imrn/rnu144

\bibitem{Che} X. Chen
 On the fundamental groups of compact Sasakian manifolds. Math. Res. Lett. {\bf 20} (2013), no. 1, 27--39.

\bibitem{dek} K.  Dekimpe,
Semi-simple splittings for solvable Lie groups and polynomial structures, \textit{ Forum Math.} {\bf 12} (2000), no. 1, 77--96.




\bibitem{EKA}
A. El Kacimi-Alaoui, 
Op\'erateurs transversalement elliptiques sur un feuilletage riemannien et applications. 
Compositio Math. {\bf 73} (1990), no. 1, 57--106. 

\bibitem{Hn} K. Hasegawa, A note on compact solvmanifolds with K\"ahler structures. \textit{ Osaka J. Math.} {\bf 43} (2006), no. 1, 131--135.
\bibitem{KT}
F. W.  Kamber, P. Tondeur, 
de Rham-Hodge theory for Riemannian foliations. 
Math. Ann. {\bf 277} (1987), no. 3, 415--431. 
\bibitem{KDD}
H. Kasuya, de Rham and Dolbeault cohomology of solvmanifolds with local systems.  {\em Math. Res. Lett.} {\bf 21} (2014), no. 4, 781--805.
\bibitem{KHH}
H. Kasuya, Flat bundles and Hyper-Hodge decomposition on solvmanifolds, \texttt{arXiv:1309.4264v1 [math.DG]}.
to appear in Int. Math. Res. Not. IMRN


\bibitem{PR}
E. Park, K. Richardson,  The basic Laplacian of a Riemannian foliation. Amer. J. Math. {\bf 118} (1996), no. 6, 1249--1275. 
 \bibitem{R}
 M. S. Raghnathan, Discrete subgroups of Lie Groups, Springer-verlag, New York, 1972.

\bibitem{Sai}
M. Sait\^o, Sur certains groupes de Lie r\'esolubles II, Sci. Papers Coll. Gen. Ed. Univ. Tokyo {\bf 7} (1957) 157--168.

\bibitem{Sim} C. T. Simpson, Higgs bundles and local systems. 
\textit{Inst. Hautes  \'Etudes Sci. Publ. Math.} No. {\bf 75} (1992), 5--95. 
\bibitem{Ti}
A. M. Tievsky, Analogues of K\"ahler	geometry on Sasakian manifolds, Ph.D. Thesis, Massachusetts Institute of Technology,	2008. Available in http://dspace.mit.edu/handle/1721.1/45349
\bibitem{Voi}
C. Voisin, {\em Hodge theory and complex algebraic geometry. I}, Cambridge Studies in Advanced Mathematics, {\bf 76}, Cambridge University Press, Cambridge, 2002.
\end{thebibliography}
\end{document}